\definecolor{shadecolor}{rgb}{1,0.9,0.7}
\newtheorem{Theorem}{Theorem}[section]
\newtheorem{Lemma}[Theorem]{Lemma}
\newtheorem{Lemma-definition}[Theorem]{Lemma-Definition}
\newtheorem{Proposition}[Theorem]{Proposition}
\newtheorem{Conjecture}[Theorem]{Conjecture}
\theoremstyle{definition}
\newtheorem{definition}[Theorem]{Definition}
\theoremstyle{remark}
\numberwithin{equation}{section}
\numberwithin{figure}{section}
\newcommand {\ol} {\overline}
\newcommand {\lra}  {\longrightarrow}
\newcommand {\im}  {\operatorname{im}}
\def\mydate{\ifcase\month \or January\or February\or March\or
April\or May\or June\or July\or August\or September\or October\or 
November\or December\fi \space\number\day,\space\number\year}
\newcommand{\IP}{\mathbb P}
\newcommand{\F}{\mathbb F}
\newcommand{\mc}{\mathcal}
\DeclareMathOperator{\spec}{Spec}
\DeclareMathOperator{\res}{Res}
\newcommand{\xyR}[1]{
  \xydef@\xymatrixrowsep@{#1}}
\newcommand{\xyC}[1]{
  \xydef@\xymatrixcolsep@{#1}}
\newcommand*{\DashedArrow}[1][]{\mathbin{\tikz [baseline=-0.25ex,-latex, dashed,#1] \draw [#1] (0pt,0.5ex) -- (2.3em,0.5ex);}}
\newcommand*{\MapstoArrow}[1][]{\mathbin{\tikz [baseline=-0.25ex,-latex, |->,#1] \draw [#1] (0pt,0.5ex) -- (2.3em,0.5ex);}}
\theoremstyle{definition}
\theoremstyle{remark}
\numberwithin{equation}{section}
\DeclareMathOperator{\gl1}{GL_1}
\DeclareMathOperator{\pgl1}{PGL_1}
\DeclareMathOperator{\sym}{Sym}
\DeclareMathOperator{\proj}{Proj}
\DeclareMathOperator{\gr}{Gr}
\newcommand{\mb}{\mathbb}
\newcommand{\gla}{\gl1(A)}
\newcommand{\glb}{\gl1(B)}
\newcommand{\pgla}{\pgl1(A)}
\newcommand{\glab}{\gl1(A\otimes_k B)}
\newcommand{\gm}{\mb{G}_{m}}
\newcommand{\ab}{A\otimes_k B}
\begin{document}

%===========================================================

\title[On a conjecture by Voskresenskii]{A constructive approach to a conjecture by Voskresenskii}

\author[M. Florence]{Mathieu Florence}
\address{Institut de Math\'ematiques de Jussieu, Universit\'e Paris 6, place Jussieu 4, 75005 Paris, France}
\email{mathieu.florence@imj-prg.fr}
\thanks{The first  author is partially supported by the French National Agency (Project GeoLie ANR-15-CE40-0012)}

\author[M. van Garrel]{Michel van Garrel}
\address{Fachbereich Mathematik, Universit\"at Hamburg, Bundesstrasse 55, 20146 Hamburg, Germany}
\email{michel.van.garrel@uni-hamburg.de}

\keywords{Linear algebraic groups; stable rationality; rationality; algebraic tori; Voskresenskii conjecture; torus-based cryptography.}

\subjclass[2010]{14E08; 14M20; 14L15; 14L30; 14G50.}

\begin{abstract}
Voskresenskii conjectured that stably rational tori are rational. Klyachko proved this assertion for a wide class of tori by general principles. We re-prove Klyachko's result by providing simple explicit birational isomorphisms, and elaborate on some links to torus-based cryptography.
\end{abstract}

\maketitle
\setcounter{tocdepth}{1}
\tableofcontents
\bigskip

%===========================================================
%===========================================================

\section{Introduction}
\label{intro}

Let $k$ be an infinite field of any characteristic.  We denote by $\overline k$ an algebraic closure of $k$. A variety $X$ over $k$ is said to be \emph{rational} if it is birational to a projective space $\IP_{k}^{n}$. A strictly weaker notion is that of stable rationality.
\begin{definition}
Let $X$ be a variety over $k$. $X$ is said to be \emph{stably rational}
if $X\times_{k}\IP_{k}^{m}$ is rational for some $m\geq0$.
\end{definition}
Let $T$ be a linear (=affine) algebraic group over $k$. Then $T$ is said to be an \emph{algebraic torus} if, over an algebraic closure of $k$, it becomes isomorphic to a product of $\mathbb{G}_{m}$'s.
A conjecture of Voskresenskii
(see \cite[p. 68]{Vo}) states that a stably rational torus over
$k$ ought to be rational. This conjecture is widely open.
A result of Klyachko (\cite{Kl}, see also \cite[sec. 6.3]{Vo}) gives a positive answer for a special type of stably rational tori, which we describe now (see section \ref{setup} for a more detailed description).

Let $A$ and $B$ be \'etale $k$-algebras of coprime dimension over
$k$. Denote by $\gl1(A)$ the algebraic group of invertible elements
in $A$. Let $T$ be the quotient of $\gl1(A\otimes_{k}B)$ by
the subgroup generated by $\gl1(A)$ and $\gl1(B)$. Then $T$
is a stably rational $k$-torus and Klyachko shows that it is in fact
rational. However, his proof by general principles does not provide a simple explicit birational isomorphism from $T$ to a projective space.

We remedy to this by re-proving Klyachko's result, constructing a simple birational map from $T$ to a projective space. We expect our construction to generalize to the situation where $A$ and $B$ are \emph{any } not necessarily commutative finite-dimensional $k$-algebras, of coprime dimension over $k$ (in that case $T$ is not necessarily a torus, or even an algebraic group).

In section \ref{application}, we explore applications of our explicit birational maps to torus-based cryptography. Following the methods developed by Rubin-Silverberg in \cite{RS}, we propose more general compression algorithms.

\section*{Acknowledgement} A large part of this work was accomplished while the first author was visiting the Korea Institute for Advanced Study (KIAS), where the second author at the time was a research fellow. The authors thank KIAS for the hospitality and excellent research environment. The comments of the referee greatly enhanced the quality of the paper. The authors are particularly thankful for the connection made with cryptography.

\section{Setup and statement of results}
\label{setup}

Let $k$ be an infinite field of any characteristic and let $V$ be a finite-dimensional $k$-vector space. We start by recalling some $k$-schemes that are associated to $V$. The \emph{affine space of $V$}, denoted by $\mb{A}(V)$, is defined as the functor
\[
X \mapsto \mb{A}(V)(X) := V\otimes_k \Gamma(X,\mc{O}_X),
\]
from $k$-schemes to sets.
It is represented by the affine scheme $\spec \left(\sym V^*\right)$.\\
The \emph{projective space of $V$}, denoted $\mb{P}(V)$, represents the (functor of)  locally free  submodules of rank one $N \subset V$,  such that the quotient $V/N$ is locally free. It is defined to be
\[
\mb{P}(V):=\left(\mb{A}(V)-\left\{0\right\}\right)/\gm = \proj \left(\sym V^*\right).
\]
Let $A$ be a not necessarily commutative (unital) $k$-algebra of finite dimension.  The linear algebraic group $\gla$ is defined as the functor
\[
X \mapsto \gla(X) := \left( A\otimes_k \Gamma(X,\mc{O}_X)\right)^\times,
\]
from $k$-schemes to groups. It is represented by the closed subscheme of $\mathbb{A}(A\oplus A)$ given by the equation $xy=1$.  One has a canonical injective homomorphism of algebraic groups $$ 1 \longrightarrow \gm \longrightarrow \gla,$$
and can form the quotient
\[
 \pgla := \gla/\gm,
\]
which is a linear algebraic group.\\ For the remainder, assume that $A$ is commutative.\\
Then, $\gla$ is canonically isomorphic to the Weil restriction of scalars $\res_{A/k}(\gm)$.   Let $M$ be an $A$-module, which is locally free of finite rank. The projective space  $\mb{P}(M)$ is defined over $\spec(A)$. In this work, it shall be viewed as a $k$-variety,  by Weil scalar restriction. More explicitly, we set
\[
\mb{P}_A(M) := \res_{A/k}(\mb{P}(M)).
\]
Consider two finite-dimensional commutative $k$-algebras $A$ and $B$. We have an exact sequence
\[
\xymatrix{
1 \ar[r] & \gm \ar[r]^(0.27)i & \gla\times\glb \ar[r]^(0.55)j & \glab,
}
\]
\[
i(x)=(x,x^{-1}), \; j(a,b)= a\otimes b.
\]
Put $H(A,B)=\im(j)$. We will consider the quotient
\begin{equation}\label{qab}
Q(A,B):=\glab/H(A,B).
\end{equation}
It follows from \cite[section 6.1, Theorem 1]{Vo} that $Q(A,B)$ is stably rational.\\
Recall that a $k$-algebra $A$ is said to be \'etale if one of the two following equivalent conditions holds:
\begin{itemize}
\item $A\cong \prod_{i=1}^n k_i$, where the $k_i$ are finite separable field extensions of $k$.
\item $A\otimes_k \overline{k} $, as a $ \overline{k} $-algebra, is isomorphic to a finite product of copies of $ \overline{k}. $
\end{itemize}
The main result of this paper is to re-prove, in a constructive fashion, the following result.

\begin{Theorem}[Klyachko in \cite{Kl}, see also \cite{Vo}, section 6.3]\label{mainthm}
Let $k$ be an infinite field of any characteristic and let $A$ and $B$ be two \'etale $k$-algebras of finite dimension. Assume  that $\dim(A)$ and $\dim(B)$ are coprime.
Then $Q(A,B)$ is $k$-rational.
\end{Theorem}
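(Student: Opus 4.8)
The plan is to make the quotient structure explicit and reduce everything to an honest birational trivialization. First I would observe that the central scalars $\gm\subset H(A,B)$, so that
\[
Q(A,B)=\glab/H(A,B)=\pgl1(\ab)/S,\qquad S:=H(A,B)/\gm,
\]
and that $\pgl1(\ab)$ is a dense open subvariety of $\pkab\cong\IP_k^{mn-1}$ (units modulo scalars), hence already rational; here $m=\dim_k A$, $n=\dim_k B$, and $\dim S=m+n-2$, so $\dim Q(A,B)=(mn-1)-(m+n-2)=(m-1)(n-1)$. Over $\overline{k}$ one has $\ab\cong\overline{k}^{mn}$ with coordinates $c_{\sigma\tau}$ indexed by the embeddings $\sigma\in\Omega_A:=\operatorname{Hom}_k(A,\overline k)$ and $\tau\in\Omega_B$, the torus $S$ acting by scaling rows ($\sigma$) and columns ($\tau$). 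The quotient is then visibly rational, with function field generated over $\overline k$ by the cross-ratios $c_{\sigma\tau}c_{\sigma'\tau'}/(c_{\sigma'\tau}c_{\sigma\tau'})$. The entire content of the theorem is to carry out this quotient construction $k$-rationally, i.e. Galois-equivariantly, and this is exactly the step into which the hypothesis $\gcd(m,n)=1$ must enter.

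To produce invariants over $k$ I would use the two partial norm maps $N_{\ab/A}\colon\glab\to\gla$ and $N_{\ab/B}\colon\glab\to\glb$ (Weil-restricted norms), together with the full norm $N_{\ab/k}\colon\glab\to\gm$. Recording their transformation under $c\mapsto(x\otimes y)c$, namely $N_{\ab/A}(c)\mapsto x^{n}N_{B/k}(y)\,N_{\ab/A}(c)$ and symmetrically for $B$, one checks by a direct cancellation that the assignment
\[
c\ \longmapsto\ c^{mn}\,\bigl(N_{\ab/A}(c)\otimes 1\bigr)^{-m}\,\bigl(1\otimes N_{\ab/B}(c)\bigr)^{-n}\,N_{\ab/k}(c)
\]
is $H(A,B)$-invariant (all factors $x^{\pm mn}\otimes 1$, $1\otimes y^{\pm mn}$ and the scalar norm factors cancel). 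Such explicit invariants, refined into maps to products of the projective spaces $\IP_k(A),\IP_k(B)$ and their cross-ratios, give a $k$-rational map $Q(A,B)\dashrightarrow\AA^{(m-1)(n-1)}$, which is the forward direction of the sought birational isomorphism.

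The hard part will be the inverse, i.e. the construction of an explicit $k$-rational section $Q(A,B)\dashrightarrow\glab$ landing in a fixed slice. Naively one would normalize $c$ so that both partial norms become trivial, but this requires extracting an $n$-th root of $N_{\ab/A}(c)^{-1}$ in $A$ and an $m$-th root of $N_{\ab/B}(c)^{-1}$ in $B$, which is impossible rationally; this root-extraction obstruction is precisely what coprimality removes. Fixing B\'ezout integers $u,v$ with $um+vn=1$, I would multiply $c$ by an explicit element $j(x,y)$ with $x\in A^{\times}$, $y\in B^{\times}$ rational in the partial norms and their $k$-norms, so that the two independent scalings are resolved \emph{simultaneously}: since $\gcd(m,n)=1$, the $n$-th-power ambiguity of $N_{\ab/A}(c)$ and the $m$-th-power ambiguity of $N_{\ab/B}(c)$ combine into a single resolvable one. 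At the level of character lattices this is the statement that the permutation-module extension
\[
0\to X^*(Q)\to\ZZ[\Omega_A\times\Omega_B]\xrightarrow{\,j^*\,}\ZZ[\Omega_A]\oplus\ZZ[\Omega_B]\xrightarrow{\,\varepsilon\,}\ZZ\to0
\]
splits as a module over the Galois group $\subset\mathrm{Sym}(\Omega_A)\times\mathrm{Sym}(\Omega_B)$ exactly when the degrees are coprime; without coprimality one only gets the (always valid) stable rationality coming from $X^*(Q)$ being stably permutation. The final step is then routine in spirit: verify that the forward invariant map and this section are mutually inverse on a dense open, yielding a $k$-birational isomorphism $Q(A,B)\cong\IP_k^{(m-1)(n-1)}$ and hence $k$-rationality. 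I expect the genuine obstacle to be this simultaneous $k$-rational normalization of the two scalings, together with bookkeeping to confirm the section is well-defined and birational.
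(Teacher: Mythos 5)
Your proposal correctly isolates the two pieces such an argument needs (a $k$-rational invariant map and a $k$-rational section), and your norm computations are accurate as far as they go, but both halves have genuine gaps that B\'ezout alone cannot close. \emph{Forward direction:} the expression $c\mapsto c^{mn}\bigl(N_{\ab/A}(c)\otimes 1\bigr)^{-m}\bigl(1\otimes N_{\ab/B}(c)\bigr)^{-n}N_{\ab/k}(c)$ is indeed $H(A,B)$-invariant, but over $\overline k$ its coordinates are the monomials with exponent vectors $\bigl((m\delta_{\sigma\sigma'}-1)(n\delta_{\tau\tau'}-1)\bigr)_{\sigma',\tau'}$, and these generate only a \emph{proper finite-index sublattice} of $X^*(Q(A,B))$: for $(m,n)=(2,3)$ one checks the index is $3$, so the induced map is generically $3$-to-$1$, an isogeny onto its image rather than a birational map. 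The cross-ratios that do generate the character lattice over $\overline k$ are permuted by Galois, so ``refining'' your invariants into a $k$-rational coordinate system is not bookkeeping --- it is the entire problem. \emph{Inverse direction:} normalizing both partial norms to $1$ requires solving $x^{n}N_{B/k}(y)=N_{\ab/A}(c)^{-1}$ with $x$ rational in $c$; componentwise over $\overline k$ this forces $x^n$ to equal a scalar multiple of a generic element of $A$, and no expression built rationally from the norms and B\'ezout exponents can be such an $n$-th root (such expressions only produce $n$-th powers times scalars). What coprimality actually buys here is that the generic stabilizer $\mu_{\gcd(m,n)}$ of the induced $H(A,B)$-action on pairs of norms is trivial, so the slice $F=\{N_{\ab/A}=1,\ N_{\ab/B}=1\}$ meets each generic orbit exactly once and $Q(A,B)$ is identified birationally with $F$. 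But $F$ is itself a torus of dimension $(m-1)(n-1)$ (a multinorm-one type torus) whose rationality is exactly as open as that of $Q(A,B)$, so this is no reduction at all.

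The deeper problem is your endgame. Even if the character-lattice sequence splits $G$-equivariantly (the augmentation half does split under coprimality, as you say; the other half you only assert), the conclusion is that $1\to H(A,B)\to\glab\to Q(A,B)\to 1$ splits, i.e.\ $\glab\cong H(A,B)\times Q(A,B)$ as $k$-tori. That exhibits $Q(A,B)$ as a direct factor of a rational torus, hence as \emph{stably} rational --- which was already known and is precisely the starting point of Conjecture \ref{conj}; passing from there to rationality is the open conjecture itself, so this route is circular. The paper supplies exactly the geometric input your sketch is missing, and it is of a different nature: B\'ezout is used in the form $ua+vb=ab+1$ (not $um+vn=1$) to choose linear subspaces $V\subset A$, $U\subset B$ containing $1$ with $\dim V=v$, $\dim U=u$, and the division map $\IP(V\otimes_k B)\times\IP(A\otimes_k U)\dashrightarrow\IP(\ab)$, $(x,y)\mapsto xy^{-1}$, is proved birational --- by a linear-algebra fiber count (the fiber over $t$ is the projectivized solution set of $x=yt$, a system of $ab$ equations in $ab+1$ unknowns) when $A,B$ are fields, then by specialization from versal \'etale algebras in general (Proposition \ref{claim}). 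This map is $\gla\times\glb$-equivariant, so it descends to a birational map $\IP_B(V\otimes_k B)\times\IP_A(A\otimes_k U)\dashrightarrow Q(A,B)$ (Lemma \ref{lem quot}), and the source is rational for free, being a product of Weil restrictions of projective spaces. A parametrizing variety that is \emph{manifestly} rational is the decisive feature that both of your candidate constructions (invariants, norm slice) lack.
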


Note that the proof of Theorem \ref{mainthm} that we provide in section \ref{rationality} is via explicit birational isomorphisms, whereas Klyachko's original proof is by general principles.\\
Recall that an algebraic $k$-torus of dimension $d$ is a $k$-group scheme $T$ such that
$$
T\times_{k} \overline{k} \cong \mb{G}^{\scriptstyle d}_{{\scriptscriptstyle m,\overline{k}}}.
$$
The following conjecture states that for algebraic tori, stable rationality is equivalent to rationality.

\begin{Conjecture}[Voskresenskii, see \cite{Vo}, section 6.2]\label{conj}
Stably rational $k$-tori are $k$-rational. 
\end{Conjecture}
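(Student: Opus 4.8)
The plan is to pass to the equivalence between algebraic $k$-tori and their Galois character lattices and thereby recast the conjecture as a birational cancellation statement. Write $\Gamma=\mathrm{Gal}(\overline{k}/k)$ and, for a torus $T$, let $\widehat{T}=X^{*}(T)$ be its character lattice, a finitely generated $\ZZ$-free $\Gamma$-module whose action factors through a finite quotient $\mathrm{Gal}(L/k)$ for a splitting field $L$. Under the anti-equivalence $T\mapsto\widehat{T}$, the quasi-trivial tori — finite products of Weil restrictions $\res_{K/k}(\gm)$ — correspond exactly to permutation $\Gamma$-modules, and such tori are open subschemes of affine space, hence $k$-rational. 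By the same ingredient used above for $Q(A,B)$ (\cite[section 6.1]{Vo}), $T$ is stably rational precisely when $\widehat{T}$ is stably permutation, i.e. $\widehat{T}\oplus P\cong P'$ for permutation modules $P,P'$; dualizing yields an honest isomorphism of tori $T\times_{k}Q\cong Q'$ with $Q,Q'$ quasi-trivial.

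First I would exploit this to reformulate the problem. Since $Q$ and $Q'$ are rational, the hypothesis says that $T\times_{k}Q\cong Q'$ is rational, and the desired conclusion is that the rational factor $Q$ can be cancelled \emph{birationally}, leaving $T$ rational. In other words, Voskresenskii's conjecture is exactly a birational cancellation assertion for tori: if $T\times_{k}\AA^{m}$ is $k$-birational to $\IP^{\dim T+m}_{k}$, then so is $T$ to $\IP^{\dim T}_{k}$.

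Next I would attack the cancellation through flasque resolutions (Endo--Miyata, Colliot-Th\'el\`ene--Sansuc). Each torus fits in an exact sequence of tori $1\to S\to Q''\to T\to 1$ with $Q''$ quasi-trivial (hence rational) and $\widehat{S}$ flasque, exhibiting $T$ as a quotient $Q''/S$; the class of $\widehat{S}$ modulo permutation modules is the flasque invariant $\rho(T)$, and $\rho(T)=0$ is equivalent to our stable-rationality hypothesis. The aim is then to show that the $S$-torsor $Q''\to T$ becomes trivial birationally — a no-name-type argument — and to descend the rationality of the total space $Q''$ to the base $T$. As sanity checks I would confirm the program in the settled cases: every torus of dimension $\le 2$ is rational outright, and Kunyavskii's classification of the finite subgroups of $\mathrm{GL}_{3}(\ZZ)$ establishes that stably rational $=$ rational in dimension $3$.

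The hard part — and the reason the conjecture is widely open — is precisely this final triviality-and-descent step. Stable rationality, retract rationality, and quasi-triviality are each governed by a computable invariant of the lattice $\widehat{T}$ (being stably permutation, invertible, or permutation, respectively), but there is no known lattice-theoretic criterion for rationality itself, which is a strictly finer and genuinely birational property. Concretely, the trivializing permutation modules $P,P'$ witnessing $\widehat{T}\oplus P\cong P'$ furnish only an abstract isomorphism $T\times_{k}Q\cong Q'$; torsors under a quasi-trivial $S$ are Zariski-locally trivial by Hilbert 90, but under a merely stably permutation $S$ the torsor $Q''\to T$ need not be, and converting the stable isomorphism into a cancellation down to the single factor $T$ would require a birational cancellation theorem that is unavailable in general (and that fails for affine varieties outside the torus setting). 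I do not expect to remove this obstacle here; the proposal should be read as isolating it as the crux.
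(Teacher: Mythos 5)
You should first be clear about the status of the statement you were given: it is a \emph{conjecture}, which the paper itself describes as widely open, and the paper contains no proof of it. What the paper proves (Theorem \ref{mainthm}, after Klyachko) is only the special case of the tori $Q(A,B)$ attached to \'etale algebras of coprime dimension, and it does so by writing down explicit birational isomorphisms (Proposition \ref{claim} and Lemma \ref{lem quot}). So there is no paper proof to compare yours against, and your proposal --- which candidly ends by isolating the obstruction rather than overcoming it --- is likewise not a proof. That is the correct outcome: your final diagnosis, that the missing ingredient is a birational cancellation theorem and that no lattice-theoretic criterion for rationality (as opposed to stable or retract rationality) is known, is exactly why the conjecture is open.

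Two assertions in your setup are nevertheless wrong and should be repaired, because they misrepresent what the reduction actually is. First, stable rationality of $T$ is \emph{not} equivalent to $\widehat{T}$ being stably permutation. The correct criterion (Endo--Miyata, Colliot-Th\'el\`ene--Sansuc, Voskresenskii) is that the flasque class $\rho(T)=[\widehat{S}]$ vanishes, equivalently that $\widehat{T}$ is \emph{quasi-permutation}: there is an exact sequence $0\to\widehat{T}\to P'\to P\to 0$ with $P,P'$ permutation. Stably permutation is strictly stronger. Counterexample: for $K/k$ cyclic of prime degree $p$ with group $G$, the torus $T=\res_{K/k}(\gm)/\gm$ is even rational (it is open in $\IP(K)\cong\IP_k^{p-1}$), yet its character lattice is the augmentation ideal $I_G$, and $H^1(G,I_G)\cong\ZZ/p\neq 0$, whereas every stably permutation module has vanishing $H^1$ for all subgroups (Shapiro plus $H^1(H,\ZZ)=0$). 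So the ``honest isomorphism $T\times_k Q\cong Q'$'' you propose to dualize to is simply not available from the hypothesis; what stable rationality gives is a presentation of $T$ as a quotient of quasi-trivial tori --- which, incidentally, is much closer to the paper's situation, since $Q(A,B)$ is by construction such a quotient and the paper proves its rationality by hand. Second, your claim that for stably permutation $\widehat{S}$ the torsor $Q''\to T$ ``need not be'' generically trivial is also incorrect: such an $S$ is a direct factor of a quasi-trivial torus, hence $H^1(L,S)=0$ for every field extension $L/k$, so the torsor is trivial over the generic point and birationally $Q''$ is $T\times_k S$. The crux is therefore purely the cancellation of the stably rational factor $S$ from the rational variety $T\times_k S$; your last paragraph names this correctly, but the two misstatements above make the path to it look cleaner than it is.
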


Theorem \ref{mainthm} thus provides a positive proof of Conjecture \ref{conj}, in a particular case.

\section{Proof of the Theorem}
\label{rationality}

Let $A$ and $B$ be \'etale $k$-algebras of coprime dimensions (over $k$) $a$ and $b$, respectively. Being invertible is an open condition, so that $\mathrm{GL} _1(A\otimes B)$ is a nonempty open subvariety of $\mb A (A \otimes B) .$ 
Choose integers $0 < u \leq b$ and $0 < v \leq a$ such that
\[
ua+vb=ab+1.
\]
This is possible since $a$ and $b$ are chosen to be coprime to each other.
For a $k$-vector subspace $W\subset A\otimes_k B$,  containing $1$, denote by $$\IP_1(W)  \subset \mb P(W)$$ the non-empty open subvariety consisting of lines directed by an invertible element of  $W$.

\begin{Proposition}\label{claim} There exist  $k$-vector subspaces $U\in \gr(u,B)(k)$ and $ V\in \gr(v,A)(k)$, both containing $1$, such that the morphism below is a birational isomorphism: 
\begin{equation}\label{phi 1}
\begin{split}
\phi_1 \; : \; \IP_1(V\otimes_k B) \times \IP_1(A\otimes_k U) \quad & \DashedArrow[->,densely dashed    ] \quad \IP_1(A\otimes_k B) = \pgl1(A\otimes_k B), \\ 
\left( x, y \right) \quad & \MapstoArrow \quad xy^{-1}.
\end{split}
\end{equation}
\end{Proposition}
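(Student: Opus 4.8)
The plan is to prove birationality by writing down an explicit rational inverse, so let me first line up the bookkeeping. Since $V\otimes_k B$ and $A\otimes_k U$ have $k$-dimensions $vb$ and $au$, the source has dimension $(vb-1)+(au-1)=ua+vb-2=ab-1$, using the relation $ua+vb=ab+1$; and $\IP_1(A\otimes_k B)=\pgl1(A\otimes_k B)$ also has dimension $ab-1$. The map $\phi_1$ is well defined: a representative of a point of the source is a pair $(x,y)$ of invertible elements of $A\otimes_k B$ with $x\in V\otimes_k B$ and $y\in A\otimes_k U$; then $xy^{-1}$ is again invertible, and rescaling $x$ or $y$ only rescales $xy^{-1}$, so its class in $\pgl1(A\otimes_k B)$ is unambiguous.

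Next I would reduce birationality to a single linear-algebra statement. Fixing an invertible representative $z$ of a point of the target, the fibre of $\phi_1$ amounts to solving $zy=x$ with $x\in V\otimes_k B$, $y\in A\otimes_k U$, both invertible, up to a common scalar. Writing $y=z^{-1}x$ (automatically invertible once $x$ is), the admissible $x$ are exactly the invertible elements of the subspace $z(A\otimes_k U)\cap(V\otimes_k B)$ of $A\otimes_k B$. Since $z$ is invertible, $\dim z(A\otimes_k U)=au$, and the dimension formula for the intersection of two subspaces of the $ab$-dimensional space $A\otimes_k B$ gives
\[
\dim\bigl(z(A\otimes_k U)\cap(V\otimes_k B)\bigr)\ \geq\ au+vb-ab\ =\ 1,
\]
with equality precisely when $z(A\otimes_k U)+(V\otimes_k B)=A\otimes_k B$. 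Thus, if $U$ and $V$ can be chosen so that for a generic $z$ this intersection is a line $\ell_z$ directed by an invertible element, then for such $z$ the fibre is a single point, and moreover the assignment $[z]\mapsto\bigl([z\,y_0],[y_0]\bigr)$, where $y_0$ spans the one-dimensional kernel of the linear map $A\otimes_k U\to(A\otimes_k B)/(V\otimes_k B),\ y\mapsto zy\bmod V\otimes_k B$, is a two-sided rational inverse. By Cramer's rule $y_0$ is the vector of signed maximal minors of this map, so the inverse is manifestly rational in $z$, and $\phi_1$ is birational (this route avoids any separability subtlety in positive characteristic).

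It remains to produce $U\in\gr(u,B)(k)$ and $V\in\gr(v,A)(k)$, both containing $1$, realising this genericity; this is the heart of the matter. I would first stress why genericity of $z$ is unavoidable: choosing compatible bases shows $(A\otimes_k U)\cap(V\otimes_k B)=V\otimes_k U$, of dimension $uv>1$ in general, so the naive choice $z=1$ fails badly. To exploit a generic $z$ I pass to $\overline k$, where étaleness splits $A\otimes_k\overline k\cong\overline k^{\,a}$ and $B\otimes_k\overline k\cong\overline k^{\,b}$, hence $A\otimes_k B\otimes_k\overline k\cong\overline k^{\,ab}$, identified with $a\times b$ matrices indexed by the primitive idempotents. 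In these coordinates $z$ acts by a generic entrywise (Hadamard) scaling $(z_{ij})$, the space $A\otimes_k U$ consists of the matrices whose rows lie in the fixed $u$-plane $U\otimes\overline k\subseteq\overline k^{\,b}$, and $V\otimes_k B$ consists of the matrices whose columns lie in $V\otimes\overline k\subseteq\overline k^{\,a}$. The condition $z(A\otimes_k U)+(V\otimes_k B)=\overline k^{\,ab}$ is then the surjectivity of a linear map whose matrix is polynomial in the $z_{ij}$, hence an open condition on $z$; by lower semicontinuity of the corank it suffices to exhibit one triple $(U,V,z)$ over $\overline k$, with $1\in U$ and $1\in V$, for which the $ab$ independent scalings $z_{ij}$ put the row-constraints and column-constraints in general position, so that the intersection is a line $\ell_z$ lying in no coordinate hyperplane $\{M_{ij}=0\}$ — equivalently, directed by an invertible element.

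Granting such an example, I would finish by descent. The locus of pairs $(U,V)$ for which a generic $z$ works is open in $\gr(u,B)\times\gr(v,A)$, defined over $k$, and nonempty; intersecting it with the $k$-rational sub-Grassmannian $\{1\in U\}\times\{1\in V\}$ and using that $k$ is infinite, so that $k$-points are dense in this rational variety, yields the required $U,V$ over $k$. The main obstacle is precisely this $\overline k$-level input: showing that the coprimality relation $ua+vb=ab+1$, which is exactly what makes the \emph{expected} intersection dimension equal to $1$, is actually attained transversally, and that the resulting line is directed by an invertible element rather than trapped in a coordinate hyperplane. I expect this to come from a direct and essentially explicit choice of $U$, $V$ and $z$, rendered transparent by the diagonal description over $\overline k$.
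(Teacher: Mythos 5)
Your reduction of birationality to a linear-algebra statement is correct and cleanly done: the fibre over an invertible $z$ is the projectivization of the invertible locus of $z(A\otimes_k U)\cap(V\otimes_k B)$, the expected dimension of that intersection is $au+vb-ab=1$, and Cramer's rule gives a rational inverse once the intersection is generically a line directed by an invertible element. But the proof has a genuine gap at exactly its decisive point, and you say so yourself: you never exhibit the triple $(U,V,z)$ over $\overline{k}$ whose existence everything else is conditioned on ("Granting such an example\dots", "I expect this to come from a direct and essentially explicit choice\dots"). This is not a routine verification that can be waved through. After splitting, $A\otimes_k B\otimes_k\overline{k}\cong\overline{k}^{\,ab}$ is very far from a field, so "nonzero" no longer implies "invertible": the requirement that the intersection line avoid all $ab$ coordinate hyperplanes $\{M_{ij}=0\}$ is a real constraint, and the exact transversality (intersection of dimension exactly $1$) must be established for all coprime $a,b$ and admissible $u,v$, not just checked in small cases. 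What you have is a correct reduction plus descent formalities (openness of the good locus, density of $k$-points on the sub-Grassmannian $\{1\in U\}\times\{1\in V\}$); the existence statement that carries the content of the Proposition is missing.

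It is instructive to compare with how the paper bridges this. It moves in the opposite direction from splitting: it first treats the case where $A$ and $B$ are \emph{fields}. Then coprimality forces $A\otimes_k B$ to be a field, so every nonzero solution $(x,y)$ of the linear system $x=yt$ is automatically invertible; the count of $ab$ equations in $au+vb=ab+1$ unknowns shows every fibre is nonempty (an open of a projective space), and equality of dimensions of source and target makes the generic fibre a point --- so in the field case \emph{any} choice of $U$ and $V$ works, with no transversality or hyperplane-avoidance to check. The general étale case is then deduced by specialization: the generic monic polynomials of degrees $a$ and $b$ over the fraction field of $k[x_0,\dots,x_{a-1},y_0,\dots,y_{b-1}]$ define étale algebras that are fields, the resulting birational isomorphism spreads out over a principal open subset $\spec(\mathcal{K}_{(F)})$ of the parameter space, and \emph{versality} of these generic algebras (in the sense of \cite{Coh}) supplies a $k$-point $\theta$ specializing them to the given $A$ and $B$, carrying $U$, $V$ and the birational map along. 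If you want to salvage your route, the most economical fix is essentially this same maneuver: prove the existence of a good triple over an auxiliary base where $A'$, $B'$ (hence $A'\otimes B'$) are fields, and transport it by a spreading-out/specialization argument --- which is precisely the machinery your sketch omits.
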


\begin{proof}[Proof in the case of fields] We first prove the assertion in the case that $A$ and $B$ are fields. Then $A\otimes_k B$ is a field as well, because $a$ and $b$ are  coprime.  Take arbitrary $U$ and $V$ as in the statement. We claim that $\phi_1$ then is a birational isomorphism. Consider the fibers of $\phi_1$. An invertible $k$-rational point of $\pgl1(A\otimes_k B)$ is given by the class of $t\in (A\otimes_k B)^\times$. The fiber over that class consists of (the projectivization of)
\[
\left\{(x,y) \in (V\otimes_k B) \oplus (A\otimes_k U) \; | \; x=yt \right\},
\]
where 
$(V\otimes_k B) \oplus (A\otimes_k U)$ is a vector $k$-space of dimension $vb+au=ab+1$. Hence the equation $x=yt$ in $A\otimes_k B$ breaks down into a homogeneous linear system of $ab$ equations in $ab+1$ variables. It follows that it has a non-trivial solution $(x,y)$ over $k$. Since $A\otimes_k B$ is a field, both $x$ and $y$ are invertible. This shows that the fiber of $\phi_1$ at $t$ is non-empty, even isomorphic to a non-empty open of a projective space. But one may base-change from $k$ to the function field $K$ of $ \pgl1(A\otimes_k B)$, and reproduce the previous arguments with $K$ instead of $k$ (note that $K/k$ is purely transcendental, hence $A\otimes_k K$ and $B\otimes_k K$ are still fields). We thus get that the generic fiber   of $\phi_1$ is $K$-rational.  But the source and target of $\phi_1$ have the same dimension $ab-1$. Hence, as asserted, $\phi_1$ is a birational isomorphism. 
\end{proof}

\begin{proof}[Proof in the general case.]
It is a specialization argument as follows. We start by introducing the polynomial algebra (in $a+b$ variables)
\[
\mathcal K:=k[x_0,\ldots, x_{a-1}, y_0, \ldots y_{b-1}],
\]
and denote by $\tilde {\mathcal K}$ its field of fractions.
Set
\[
\mathcal A:=K[T]/<T^{a}+x_{a-1}T^{a-1}+\ldots+x_1 T +x_0>
\]
and
\[
\mathcal B:=K[T]/<T^{b}+y_{b-1}T^{b-1}+\ldots+y_1 T +y_0>,
\]
and put
\[
\tilde{ \mathcal A}:=\mathcal A \otimes_{\mathcal K} \tilde{ \mathcal K}
\]
as well as
\[
\tilde{ \mathcal B}:=\mathcal B \otimes_{\mathcal K} \tilde{ \mathcal K}.
\]
Then $\tilde{ \mathcal A}$ (resp. $\tilde{ \mathcal B}$) is an \'etale $\tilde {\mathcal K}$-algebra of degree $a$ (resp. $b$). It is clearly a field. Pick $\tilde {\mathcal K}$-subspaces  $\tilde {\mathcal U}\in \gr(u,\tilde {\mathcal B})(\tilde {\mathcal K})$ and $\tilde {\mathcal V}\in \gr(v,\tilde {\mathcal A})(\tilde {\mathcal K})$, both containing $1$. By what precedes, the $\tilde {\mathcal K}$-morphism \begin{equation*}
\begin{split}
\tilde \Phi_1 \; : \; \IP_1(\tilde {\mathcal V} \otimes_{\tilde {\mathcal K}} \tilde {\mathcal B}) \times  \IP_1(\tilde {\mathcal A} \otimes_{\tilde {\mathcal K}} \tilde {\mathcal U}) \quad & \DashedArrow[->,densely dashed    ] \quad \IP_1(\tilde {\mathcal A} \otimes_{\tilde {\mathcal K}} \tilde {\mathcal B}) , \\ 
\left( x, y \right) \quad & \MapstoArrow \quad xy^{-1}
\end{split}
\end{equation*}
is a birational isomorphism. Since all above schemes are of finite presentation over $\tilde {\mathcal K}$, they, as well as $\Phi_1$, are actually defined over a nonempty open subscheme of $\spec(\mathcal K)$. More precisely, there exists a nonzero element $F \in \mathcal K$, such that, denoting by $\mathcal K_{(F)}$ the $k$-algebra obtained by inverting $F$ in $\mathcal K$, the following holds:
\begin{enumerate}[label=(\alph*)]
\item The $\mathcal K_{(F)}$-algebras   $\mathcal A_{(F)}$   and $\mathcal B_{(F)}$ are \'etale.
\item The subspaces  $\tilde {\mathcal U}$ and $\tilde {\mathcal V}$ are defined over $\mathcal K_{(F)}$, i.e., are given by elements  $ {\mathcal U}\in \gr(u, {\mathcal B})( {\mathcal K_{(F)}})$  and $ {\mathcal V}\in \gr(u, {\mathcal A})( {\mathcal K_{(F)}})$, respectively.
\item The  $ {\mathcal K_{(F)}}$-morphism \begin{equation*}
\begin{split}
 \Phi_1 \; : \; \IP_1( {\mathcal V} \otimes_{ {\mathcal K_{(F)}}}  {\mathcal B_{(F)}}) \times  \IP_1( {\mathcal A_{(F)}} \otimes_{ {\mathcal K_{(F)}}}  {\mathcal U}) \quad & \DashedArrow[->,densely dashed    ] \quad \IP_1( {\mathcal A_{(F)}} \otimes_{ {\mathcal K_{(F)}}}  {\mathcal B_{(F)}} ) , \\ 
\left( x, y \right) \quad & \MapstoArrow \quad xy^{-1}
\end{split}
\end{equation*} is a birational isomorphism.
\end{enumerate}
But the  \'etale $\tilde {\mathcal K}$-algebras $\tilde {\mathcal A}$ and  $\tilde {\mathcal B}$ are \textit{versal}, in the sense of \cite[Definition 5.1, see also section 24.6]{Coh}. Hence, there exists a $k$-morphism
\[
\theta:   {\mathcal K_{(F)}} \lra k
\]
such that $ {\mathcal A_{(F)}} \otimes_\theta k $ is isomorphic to $A$ (resp.   such that $ {\mathcal B_{(F)}} \otimes_\theta k $ is isomorphic to $B).$   Put  $V:=\mathcal V  \otimes_\theta k$ and $U:=\mathcal U  \otimes_\theta k$. Then $U$ (resp. $V$) belongs to $ \gr(u,B)(k)$ (resp. to $ \gr(v,A)(k)$), and the specialization of $\Phi_1$ via $\theta$ yields the birational isomorphism $\phi_1$.
This finishes the proof of Proposition \ref{claim}.
\end{proof}

Note that $\IP_1(V\otimes_k B) \times \IP_1(A\otimes_k U)$ is open in $\IP(V\otimes_k B) \times \IP(A\otimes_k U)$, and that $\IP_1(A\otimes_k B)$ is open in $\IP(A\otimes_k B)$. Hence the map $\phi_1$ of \eqref{phi 1} extends to a birational isomorphism
\begin{equation}\label{phi}
\phi \; : \; \IP(V\otimes_k B) \times \IP(A\otimes_k U) \quad  \DashedArrow[->,densely dashed    ] \quad \IP(A\otimes_k B).
\end{equation}
Generically, $G:=\gla / \gm \times \glb / \gm$ acts freely on both sides of \eqref{phi}. %We quotient out both sides of \eqref{phi} by the action of $\gla \times \glb$, and use 
We have the identifications as birational quotients:
\begin{align*}
\IP(V\otimes_k B) / \left(\glb/\gm\right) & \equiv  \left(V\otimes_k B / \gm \right) / \left( \glb/\gm \right) \\ & \equiv  \left(V\otimes_k B \right) / \glb \equiv  \IP_B(V\otimes_k B).
\end{align*}
Since the action of $\glb/\gm$ on $V\otimes_k B / \gm$ is generically free, we conclude that $\dim \IP_B(V\otimes_k B) = vb - b$. Similarly,
$$
\IP(A\otimes_k U) / \gla \equiv  \IP_A(A\otimes_k U)
$$
is of dimension $au-a$.\\
On the right hand side of the map of \eqref{phi}, we take the following birational quotient:
%\begin{align*}
\[
\IP(\ab)  / G     % \, / \, \left({\scriptstyle \gla/\gm \times \glb/\gm}\right) \; 
% \equiv \left(\ab/\gm\right)  / G  % \, / \, {\scriptstyle \gla \times \glb} \\
 \equiv  \left(\ab/\gm\right)  / G .  %\, / \, {\scriptstyle \left(\gla / \gm\right) \times \left(\glb / \gm\right)}
 \]
%\end{align*}
As $G$ acts generically freely, the dimension of this quotient is $ab-a-b+1$.
For an $A$-module $M$, recall from section \ref{setup} that we defined $\mb{P}_A(M)$ to be the Weil scalar restriction $\res_{A/k}(\mb{P}(M))$.

\begin{Lemma}\label{lem quot}
The map $\phi$ of \eqref{phi} induces a birational isomorphism
\begin{equation*}%\label{phi ind}
\ol{\phi} \; : \; \IP_B(V\otimes_k B) \times \IP_A(A\otimes_k U) \quad  \DashedArrow[->,densely dashed    ] \quad \IP(\ab) \, / \, {\scriptstyle \gla \times \glb}.
\end{equation*}
\end{Lemma}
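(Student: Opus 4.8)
The plan is to realise $\ol{\phi}$ as the map induced by $\phi$ on rational quotients by the group $G := \gla/\gm \times \glb/\gm$. Once one knows that $\phi$ is $G$-equivariant, the lemma follows by passing to quotients and rewriting each side by means of the identifications already recorded just before the statement. So the proof splits into three tasks: (i) make the $G$-actions on source and target of \eqref{phi} explicit and check equivariance; (ii) descend $\phi$ to the quotients; (iii) identify the two quotients with the varieties appearing in $\ol{\phi}$.

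First I would fix the actions. On the source, let $\glb/\gm$ act on $\IP(V\otimes_k B)$ through right multiplication on the factor $B$, and let $\gla/\gm$ act on $\IP(A\otimes_k U)$ through left multiplication on the factor $A$; both descend from $\glb$, resp. $\gla$, because the central $\gm$ acts by scalars and hence trivially on the relevant projective space. On the target $\IP(\ab)=\pgl1(\ab)$, let $G$ act through the two-sided multiplication $(\ol{\alpha},\ol{\beta})\cdot t = (\alpha^{-1}\otimes\beta)\,t$, which again factors through $G$ since scalars act trivially projectively. The crux is the computation
\[
\phi\bigl(x\beta,\ \alpha y\bigr)=(x\beta)(\alpha y)^{-1}=x\beta y^{-1}\alpha^{-1}=(xy^{-1})\,\alpha^{-1}\beta,
\]
where the last equality uses that $A$ and $B$ are commutative, so that $\ab$ is a commutative ring and the factors recombine freely. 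This shows $\phi$ intertwines the two $G$-actions, i.e. $\phi$ is $G$-equivariant.

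Next I would invoke the general principle (Rosenlicht) that a $G$-equivariant birational isomorphism between two varieties on which $G$ acts generically freely descends to a birational isomorphism of the associated rational quotients. Since, as noted before the lemma, $G$ acts generically freely on both sides of $\phi$, this yields a birational isomorphism $\bigl[\IP(V\otimes_k B)\times\IP(A\otimes_k U)\bigr]/G \DashedArrow[->,densely dashed] \IP(\ab)/G$. It then remains to rewrite both quotients. On the source, the identifications already established give $\IP(V\otimes_k B)/(\glb/\gm)\equiv\IP_B(V\otimes_k B)$ and $\IP(A\otimes_k U)/(\gla/\gm)\equiv\IP_A(A\otimes_k U)$, so the source quotient is $\IP_B(V\otimes_k B)\times\IP_A(A\otimes_k U)$. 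On the target, the action of the full group $\gla\times\glb$ on $\IP(\ab)$ factors through $G$, since both central copies of $\gm$ act by scalars; hence $\IP(\ab)/G\equiv\IP(\ab)/(\gla\times\glb)$. Combining these identifications with the descended map produces exactly $\ol{\phi}$.

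The main obstacle is the equivariance bookkeeping together with the descent step. For equivariance one must pin down the left/right conventions so that multiplication on the $A$- and $B$-factors of the source matches two-sided multiplication on the target; commutativity of $\ab$ is precisely what lets $\alpha^{-1}$ and $\beta$ recombine, and without it $\phi$ would not descend. For the descent I would remark that over the function field of each quotient the generic fibre of the quotient map is a $G$-torsor, so a $G$-equivariant generic isomorphism of total spaces induces an isomorphism of these generic fibres and hence of the quotients; the dimension count $ab-a-b+1$ on both sides (carried out in the discussion preceding the lemma) confirms that nothing degenerates. I expect the only genuinely delicate point to be confirming that the induced map is defined and dominant on a dense open set, which follows from the genericity of the free locus for the $G$-actions.
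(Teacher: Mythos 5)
Your proposal is correct and takes essentially the same route as the paper: the paper's proof likewise rests on the observation that $\phi$ descends to the quotients because it is given by inversion and multiplication (i.e.\ it is $G$-equivariant), combined with the matching dimension count $ab-a-b+1$ and the quotient identifications recorded before the lemma. You have simply made explicit the equivariance computation and the Rosenlicht-type descent that the paper dismisses as ``clear.''
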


\begin{proof}
The dimensions of both quotients agree. Since the map is a birational isomorphism before taking the quotient, we only need to show that it descends to the quotient. But that is clear since the map is given by taking the inverse and multiplication.
\end{proof}

Finally, note that $ \IP(\ab) \, / \, {\scriptstyle \gla \times \glb}$ is birational to $Q(A,B)$. This then completes the proof of Theorem \ref{mainthm}, as both $\IP_B(V\otimes_k B)$ and $\IP_A(A\otimes_k U)$ are rational.

\section{An application to cryptography}
\label{application}

Our explicit birational maps open up some new venues for torus-based cryptography. Using finite cyclic groups for public key encryption is an old idea, cf. \cite[chapter 8]{MOV}. Rubin-Silverberg in \cite{RS} suggested using rational algebraic tori defined over finite fields. The advantage is in term of computational gain. Representing most elements of the torus as elements of an affine space over a finite field yields efficiency gains in the transmitted information. Let $q$ be a prime power and choose $n\geq1$ to be a square-free integer. If $\F_q \subseteq L \subsetneq \F_{q^n}$ is an intermediate field, recall that there is a norm map
\[
N_{\: \F_{q^n}/L} : \res_{\: \F_{q^n}/ \: \F_q}\mathbb{G}_m  \to  \res_{\: L/ \: \F_q}\mathbb{G}_m.
\]
Following \cite{RS}, consider
\[
T_n := \bigcap_{\F_q \subseteq L \subsetneq \F_{q^n}} \ker\left( N_{\: \F_{q^n}/L} \right) \text{ and } G_{q,n} := T_n(\F_q).
\]
For encryption purposes, $G_{q,n}$ is the cryptographically most significant part of $\F_{q^n}^\times$ and $G_{q,n}$, albeit smaller, inherits the security of $\F_{q^n}^\times$. See \cite{RS} for more details.
$G_{q,n}$ is a torus over $\F_q$ of dimension $\phi(n)$, where $\phi$ denotes Euler's phi function. Assuming that it is rational, one then would like to (computationally) compress elements of $G_{q,n}$ via a \emph{compression map} (birational map)
\begin{equation*}
f \; : \; G_{q,n} \quad  \DashedArrow[->,densely dashed    ] \quad \mathbb{F}_q^{\varphi(n)}\end{equation*}
that has an efficiently computable inverse $j$. Since $G_{q,n}<\F_{q^n}^\times$, the latter being of dimension $n$ over $\F_q$, sending $f(x)$ instead of $x\in G_{q,n}$ yields an efficiency gain (in bits) of $n/\phi(n)$. Based on this idea, Rubin-Silverberg introduce two compression algorithms inducing efficient public key cryptosystems that they name $\mathbb{T}_2$ and CEILIDH. They also explain how to extend their algorithms to all $G_{q,n}$, provided that a compression map is known. Note that the encryption is restricted to the open part of $G_{q,n}$ where $f$ and $j$ are mutually inverse. This part is large if $q$ is large, see the discussion in \cite{RS}.  Moreover, they limit their discussion to when $n$ is the product of up to two distinct primes. In particular, they consider $n=2$ for  $\mathbb{T}_2$ and $n=6$ for CEILIDH (to yield secure encryption, $q$ should be large). If $n$ is the product of at least three primes, it is not known, though conjectured by Voskresenskii, that $G_{q,n}$ is rational.

 $\mathbb{T}_2$ and CEILIDH are based on explicit birational compression maps that Rubin-Silverberg construct from Galois extensions. They rely on choosing generators for these extensions. Our setting extends the groups beyond $G_{q,n}$ and does not rely on the extension being Galois, nor on choosing generators.

For the remainder, let $A=\F_{q^a}$ and $B=\F_{q^b}$, where $a$ and $b$ are coprime. Our birational decompression map
\begin{equation*}%\label{decompmap}
\phi(A,B) \; : \; \IP_B(V\otimes_{\F_q} B) \times \IP_A(A\otimes_{\F_q} U) \quad  \DashedArrow[->,densely dashed    ] \quad Q(A,B)
\end{equation*}
solely depends on the choice of the $\F_q$-vector subspaces $U\in \gr(u,B)(\F_q)$ and $ V\in \gr(v,A)(\F_q)$ of Proposition \ref{claim}. Note that, though Theorem \ref{mainthm} is a priori stated for infinite fields, it is easy to see that it actually holds for $k$ finite, when $A$ and $B$ are fields. 
Furthermore, since $a$ and $b$ are coprime to each other, $A\otimes_{\F_q} B=\F_{q^{ab}}$ and
\[
Q(A,B)(\F_q) = \F_{q^{ab}}^\times/ \langle \F_{q^a}^\times,\F_{q^b}^\times\rangle,
\]
where, cf. \eqref{qab}, $\langle \F_{q^a}^\times,\F_{q^b}^\times\rangle = H(\F_{q^a},\F_{q^b})(\F_q)$ is the subgroup generated by $\F_{q^a}^\times$ and $\F_{q^b}^\times$. If in addition $a$ and $b$ are distinct primes (or $b=1$ and $a$ is prime), then
\[
Q(A,B)(\F_q) \cong    Q_{q,ab},
\]
which is the case developed in \cite{RS}. Note that our compression maps differ, and work for all choices of primes $a$ and $b$.

In order to have a computationally efficient extension of Rubin-Silverberg's algorithms to $Q(A,B)$, two conditions must be satisfied. Firstly, the ratio $ab/\phi(ab)$ should be large. Second and most crucially, the $\F_q$-vector subspaces $U$ and $V$ should be chosen such that the birational inverse of $\phi(A,B)$ is computed fast. As explained in the proof of Proposition \ref{claim}, calculating this inverse is obtained through solving linear equations. Suitable choices of $U$ and $V$ will lead to computationally efficient algorithms. We leave the specifics of implementation to future considerations.

%===========================================================


\begin{thebibliography}{77}	



\bibitem{Coh}
S. Garibaldi, A. Merkurjev and J.-P. Serre,
\emph{Cohomological Invariants in Galois Cohomology},
University Lecture Series,
Vol. 28,
AMS, 2003.




\bibitem{Kl}
A. A. Klyachko,
\emph{On rationality of tori with a cyclic splitting field},
Arithmetic and Geometry of Varieties,
Kuibyshev Univ. Press,
Kuibyshev, 1988, pp. 73-78 (Russian).


\bibitem{MOV}
A. J. Menezes, P. C. van Oorschot and S. A. Vanstone,
\emph{Handbook of applied cryptography},
CRC Press, Boca Raton, FL, 1997.


\bibitem{RS}
K. Rubin and A. Silverberg,
\emph{Compression in finite fields and torus-based cryptography},
SIAM J. Comput.,
Vol. 37, No. 5 (2008),
pp. 1401-1428.


\bibitem{Vo}
V. E. Voskresenskii,
\emph{Algebraic groups and their birational invariants},
Translations of mathematical monographs,
Vol. 179,
AMS, 1998.



\end{thebibliography}
\end{document}